\newtheorem{thm}{Theorem}[section]
\newtheorem{cor}[thm]{Corollary}
\newtheorem{lem}[thm]{Lemma}
\newtheorem{prop}[thm]{Proposition}
\theoremstyle{definition}
\newtheorem{setup}[thm]{Setup}
\newtheorem{defns}[thm]{Definitions}
\newtheorem{defn}[thm]{Definition}
\newtheorem{ex}[thm]{Example}
\newtheorem{rmk}[thm]{Remark}
\newcommand{\N}{\mathbb{N}}
\newcommand{\pre}{\preceq}
\newcommand{\st}{\ensuremath{\, | \,}}
\newcommand{\Sgen}{S=\langle a_1,...,a_\nu \rangle}
\DeclareMathOperator{\Ap}{Ap}
\DeclareMathOperator{\pv}{\mathrm{pv}}
\title{Position Vectors of Numerical Semigroups}
\author{Lance Bryant and James Hamblin}
\address{Shippensburg University \\1871 Old Main Dr. \\ Shippensburg, PA 17257}
\email{lebryant@ship.edu, jehamb@ship.edu}
\begin{document}

\maketitle

\begin{abstract} We provide a new way to represent numerical semigroups by showing that the position of every Ap\'ery set of a numerical semigroup $S$ in the enumeration of the elements of $S$ is unique, and that $S$ can be re-constructed from this ``position vector."   We extend the discussion to more general objects called numerical sets, and show that there is a one-to-one correspondence between $m$-tuples of positive integers and the position vectors of numerical sets closed under addition by $m+1$. We consider the problem of determining which position vectors correspond to numerical semigroups.
\end{abstract}

\section{Introduction}\label{sec: intro}
We let $\N$ and $\N_0$ denote the positive and nonnegative integers, respectively. A {\em numerical semigroup} $S$ is a subsemigroup of $\N_0$ that contains 0 and has finite complement in $\N_0$. For two elements $u$ and $u'$ in $S$, $u \pre_S u'$ if there exists an $s\in S$ such that $u+s = u'$. This defines a partial ordering on $S$. The minimal elements in $S\setminus\{0\}$ with respect to this ordering form a unique minimal set of generators for $S$, which is denoted by $\{a_1,a_2,\dots, a_\nu\}$ where $a_1<a_2<\dots<a_\nu$. The semigroup $S=\{\sum_{i=1}^{\nu} c_ia_i : c_i\ge 0\}$ is represented using the notation $\Sgen$. Since the minimal generators of $S$ are distinct modulo $a_1$, the set of minimal generators is finite. Furthermore, having finite complement in $\N_0$ is equivalent to $\gcd\{a_i : 1\le i\le \nu\} = 1$.

The number of minimal generators of a semigroup $S$ is called the {\em embedding dimension of $S$}, and is denoted by $\nu=\nu(S)$. The element $a_1$ is called the {\em multiplicity} of $S$, and is also denoted by $e_0(S)$. When $S\ne \N_0$, we always have $2\le \nu(S)\le e_0(S)$.

For $0\ne n\in S$, the {\em Ap\'ery set} of $S$ with respect to  $n$ is the set $$\Ap(S,n) =\{w\in S : w-n\not\in S\}.$$ Every numerical semigroup containing $n$ has a unique Ap\'ery set with respect to $n$ from which much can be gleaned. Indeed, in \cite{GR} the Ap\'ery set is described as the most versatile tool in numerical semigroup theory.

We take the representation by an Ap\'ery set a step further. Consider the semigroup $S=\langle 4,7,9\rangle$. We have $\Ap(S,4) = \{0,7,9,14\}$. If we enumerate the elements of $S$ so that $S=\{\lambda_0, \lambda_1,\lambda_2,\dots\}$, where $\lambda_i<\lambda_j$ whenever $i<j$, then $\Ap(S,4)=\{\lambda_0, \lambda_2,\lambda_4,\lambda_8\}$. We can say that the position of the Ap\'ery set in the enumeration is given by $(0,2,4,8)$. It will be convenient to remove 0 from this vector and consider the difference of the components. For example, we represent $S=\langle 4,7,9\rangle$ (as a semigroup containing $4$) with the vector $(2,2,4)$ instead of $(0,2,4,8)$. This new vector has nonnegative integer components and stores equivalent information about the semigroup. We make the following definition.

\begin{defn}
Let $S = \{\lambda_0, \lambda_1, \lambda_2,\dots\}$ be a numerical semigroup containing $n\ne0$ such that $\lambda_i<\lambda_j$ whenever $i<j$. If $\Ap(S,n) = \{\lambda_0, \lambda_{x_1}, \lambda_{x_2},\dots,\lambda_{x_{n-1}}\}$, then the $(n-1)$-tuple $(x_1,x_2-x_1, x_3-x_2,\dots,x_{n-1}-x_{n-2})$ is called the {\em position vector} of $S$ with respect to $n$, and is denoted by $\mathrm{pv}_n(S)$.
\end{defn}

We show in Corollary~\ref{cor: ideal and vector} that if $S$ and $T$ are semigroups containing $n\ne0$, then $S=T$ if and only if $\pv_n(S) = \pv_n(T)$. Thus, a position vector is a representation of the semigroup. This is a rather remarkable fact. Consider the semigroup $\langle 4,7,9\rangle$. Since the position vector is $(2,2,4)$ (which means that the elements of $\Ap(\langle 4,7,9\rangle,4)$ are in positions $0$, $2$, $4$, and $8$ in the enumeration), no other semigroup can have an Ap\'ery set with elements in those same positions. This is certainly not true for minimal generating sets: the position of the minimal generators of $\langle 4,7,9\rangle$ and $\langle 4,6,9\rangle$ are the same, namely the first, second, and fourth elements in the enumeration. Nonetheless, the position vectors are $(2,2,4)$ and $(2,2,5)$ respectively.

Not every vector of positive integers is the position vector of a numerical semigroup. In Section~\ref{sec: fc and pv} we extend our consideration to more general objects than numerical semigroups, which we call numerical sets, and show that there is a one-to-one correspondence between elements of $\N^{n-1}$ and the position vectors of numerical sets closed under addition by $n$. Although we do not stress the fact in this paper, a numerical set $I$ can always be interpreted as a relative ideal of a semigroup contained in $I$, and so, in a sense, we have not extended beyond the theory of numerical semigroups.

Among the position vectors of numerical sets, we examine the problem of determining which represent numerical semigroups in Section~\ref{pv and ns}. 


\section{Position vectors of numerical sets}\label{sec: fc and pv}

As stated in the introduction, we need to work with objects more general than numerical semigroups.

\begin{defn}
A {\em numerical set} $I$ is a subset of $\N_0$ that contains $0$ and has finite complement in $\N_0$. 
\end{defn}

\begin{rmk}
A numerical set $I$ is closed under addition if and only if it is a numerical semigroup. When $I$ is not closed under addition, it is a relative ideal of the numerical semigroup $I-I$. Thus, we can think of the numerical sets as a certain collection of relative ideals which includes numerical semigroups. See \cite{BDF} for information on relative ideals and \cite{MM,PT} for numerical sets.
\end{rmk}

We need to define the position vector of a numerical set as we did for numerical semigroups in the introduction. We begin with a preliminary definition.

\begin{defn}
For $n\ne 0$, let $\Gamma_n$ be the collection of numerical sets $I$ such that $n + i \in I$ for all $i\in I$.
\end{defn}

For $I\in \Gamma_n$, we can define the Ap\'ery set with respect to $n$ as $$\Ap(I,n) = \{w\in I : w-n \not\in I\}.$$ As with numerical semigroups, it is not difficult to see that $\Ap(I,n)$ contains exactly $n$ elements of $I$ including $0$. Now we can define the position vector of the numerical set.

\begin{defn}
Let $I = \{\lambda_0, \lambda_1, \lambda_2,\dots\}$ be in $\Gamma_n$ such that $\lambda_i<\lambda_j$ whenever $i<j$. If $\Ap(I,n) = \{\lambda_0, \lambda_{x_1}, \lambda_{x_2},\dots,\lambda_{x_{n-1}}\}$, the $(n-1)$-tuple $(x_1,x_2-x_1, x_3-x_2,\dots,x_{n-1}-x_{n-2})$ is called the {\em position vector} of $I$ with respect to $n$, and is denoted by $\mathrm{pv}_n(I)$.
\end{defn}

A numerical set has multiple position vectors, but no two can have the same length. Therefore, $f: \Gamma_n \mapsto \N^{n-1}$ such that $f(I) = \mathrm{pv}_n(I)$ is a well-defined function. The goal of this section is to prove that $f$ is a one-to-one correspondence, which is proven in Theorem~\ref{thm: ideal and vector}.

We first establish Proposition~\ref{prop: permutation 1-1 conversion}, which contains a result about permutations on the set $1, 2, \dots, m$. 

\begin{defn}\label{defn: conversion}
Let $\pi = [\pi_1\cdots\pi_{m}]$ be a permutation of the set $\{1,\dots,m\}$. Then the {\em conversion vector} of $\pi$ is $r = (r_1,\dots,r_m)$, where $r_i= | \{j : j<i$ and $\pi_j < \pi_i\}|$. 
\end{defn}

Notice that in Definition~\ref{defn: conversion}, we have $0\le r_i \le i-1$ for all $1\le i\le m$. Moreover, Proposition~\ref{prop: permutation 1-1 conversion} reveals that every vector $(r_1,\dots,r_m)$ with this restriction is the conversion vector of a unique permutation. This result is similar to a well-known result about inversion vectors of permutations, see \cite{PS,T}.

\begin{prop}\label{prop: permutation 1-1 conversion}
For a fixed integer $m\ge 1$, let $r=(r_1,r_2,\dots,r_m)$ be a vector such that $0\le r_i\le i-1$, for $1\le i\le m$. Then there is a unique permutation $\pi$ of the set $\{1,\dots,m\}$ for which $r$ is its conversion vector. 
\end{prop}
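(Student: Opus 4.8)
I would argue by induction on $m$. For $m=1$ the only vector with $0\le r_1\le 0$ is $r=(0)$, and the only permutation of $\{1\}$ is $[1]$, whose conversion vector is indeed $(0)$; so the base case holds.

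Two elementary observations will drive the inductive step. First, the defining quantity $|\{j<i : a_j<a_i\}|$ makes sense for any finite sequence $a_1,\dots,a_k$ of \emph{distinct} integers, and it is unchanged if we replace that sequence by its \emph{standardization} $\bar a_1,\dots,\bar a_k$ (the permutation of $\{1,\dots,k\}$ obtained by replacing each entry by its rank), because $a_j<a_i \Leftrightarrow \bar a_j<\bar a_i$. Second, for any permutation $\pi$ of $\{1,\dots,m\}$ the value $\pi_m$ is forced by the last coordinate of its conversion vector: every value strictly smaller than $\pi_m$ lies in one of the positions $1,\dots,m-1$ (since position $m$ is occupied by $\pi_m$), so $r_m=|\{j<m:\pi_j<\pi_m\}|=\pi_m-1$, i.e.\ $\pi_m=r_m+1$; note $0\le r_m\le m-1$ makes $r_m+1$ a legitimate element of $\{1,\dots,m\}$.

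Now assume the statement for $m-1$ and fix $r=(r_1,\dots,r_m)$ with $0\le r_i\le i-1$. Put $v=r_m+1$. By the induction hypothesis there is a unique permutation $\sigma$ of $\{1,\dots,m-1\}$ with conversion vector $(r_1,\dots,r_{m-1})$. Let $\phi\colon\{1,\dots,m-1\}\to\{1,\dots,m\}\setminus\{v\}$ be the order-preserving bijection ($\phi(k)=k$ for $k<v$, $\phi(k)=k+1$ for $k\ge v$) and set $\pi=[\,\phi(\sigma_1)\,\cdots\,\phi(\sigma_{m-1})\;v\,]$. Since $\phi$ is strictly increasing, the first $m-1$ coordinates of the conversion vector of $\pi$ agree with those of $\sigma$, namely $(r_1,\dots,r_{m-1})$; by the second observation its last coordinate is $v-1=r_m$. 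Hence $\pi$ realizes $r$. For uniqueness, suppose $\pi'$ also has conversion vector $r$: then $\pi'_m=r_m+1=v$ is forced; deleting this last entry and standardizing the remaining prefix yields a permutation of $\{1,\dots,m-1\}$ whose conversion vector is $(r_1,\dots,r_{m-1})$, so it must be $\sigma$ by induction; un-standardizing (which, knowing $v$, is exactly the map $\phi$) recovers $\pi'=\pi$.

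The one place that needs care is the bookkeeping around standardization in the third paragraph: one must check that ``insert $v$ at the end'' (via $\phi$) and ``delete the last entry, then re-rank'' are genuinely mutually inverse, and that the conversion-vector coordinates transform as claimed under each. Beyond that the argument is routine. (One could instead prove only that $\pi\mapsto r$ is injective, by the same ``$\pi_m$ is forced, then recurse'' reasoning, and then finish by counting: there are $\prod_{i=1}^{m} i=m!$ vectors $r$ with $0\le r_i\le i-1$, which is exactly the number of permutations of $\{1,\dots,m\}$, so an injection between these two sets is automatically a bijection.)
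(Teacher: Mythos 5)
Your argument is correct and is essentially the paper's proof: your order-preserving insertion map $\phi$ is exactly the paper's shift $\sigma_i\mapsto\sigma_i+\delta_i$ followed by appending $r_m+1$, and the base case and induction are identical. The only cosmetic difference is that you prove uniqueness directly via the forced value $\pi_m=r_m+1$, whereas the paper dispenses with uniqueness by the counting argument you mention parenthetically at the end; both are fine.
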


\begin{proof}
Since there are exactly $m!$ such vectors and $m!$ permutation of length $m$, it suffices to show that each vector is the conversion vector of some permutation. The uniqueness follows by counting.

We will proceed by induction on $m$. If $m=1$, then $r=(0)$ and $\pi = [1]$ has $r$ as its conversion vector. Now assume that $m\ge2$ and that there is a permutation $\sigma = [\sigma_1\sigma_2\cdots\sigma_{m-1}]$ with the conversion vector $(r_1,r_2,\dots,r_{m-1})$. For each $1\le i\le m-1$, define $$\delta_i = \begin{cases} 1 &\mbox{if } \sigma_i > r_m\\ 0 &\mbox{otherwise}.\end{cases}$$ Let $\pi = [(\sigma_1+\delta_1)(\sigma_2+\delta_2)\cdots(\sigma_{m-1}+\delta_{m-1})(r_m+1)].$ It is not difficult to see that $\pi$ is a permutation, and that $(\sigma_i+\delta_i) < (\sigma_j+\delta_j)$ if and only if $\sigma_i < \sigma_j$, for $1\le i\le m-1$. Thus the conversion vector of $\pi$ is $r$.
\end{proof}

\begin{ex}\label{ex: permutation and conversion}
Consider the permutation $[42351]$. We can directly observe that the conversion vector is $(0,0,1,3,0)$. Conversely, since the proof in Proposition~\ref{prop: permutation 1-1 conversion} is constructive, we can recursively recover the permutation as follows:
\begin{eqnarray*}
[(0+1)] &=& [1]\\
\ [(1+1)(0+1)] &=& [21]\\
\ [(2+1)(1+0)(1+1)] &=& [312]\\
\ [(3+0)(1+0)(2+0)(3+1)] &=& [3124]\\
\ [(3+1)(1+1)(2+1)(4+1)(0+1)] &=& [42351].
\end{eqnarray*}
\end{ex}

If the elements of the Ap\'ery set of a numerical set $I$ are known, the position vector can be determined without considering the enumeration of all the elements of $I$. To see this, let $\Ap(I,n) = \{\lambda_{x_0},\lambda_{x_1},\dots,\lambda_{x_{n-1}}\}$. We write $\lambda_{x_i} = nk_i + \pi_i$, where $0\le \pi_i <n$. Notice that the elements of $\Ap(I,n)$ form a complete residue system modulo $n$ and $\pi_0 = 0$. Thus, $\pi = [\pi_1\cdots\pi_{n-1}]$ is a permutation of the set $\{1,\dots,n-1\}$ with a corresponding conversion vector $r = (r_1,r_2,\dots,r_{n-1})$.

\begin{thm}\label{thm: from ap to v}
Let $I\in \Gamma_n$ and $\Ap(I,n)=\{w_0,w_1,\dots,w_{n-1}\}$, where $w_i < w_j$ whenever $i<j$. We write $w_i = nk_i + \pi_i$, with $0\le \pi_i <n$. Let $r= (r_1,\dots,r_{n-1})$ be the conversion vector of $\pi = [\pi_1\pi_2\cdots\pi_{n-1}]$. Then $\pv_n(I) = (v_1,v_2,\dots,v_{n-1})$, where $v_1 = k_1+1$ and $v_i = i(k_i - k_{i-1}) + (r_i - r_{i-1})$, for $2\le i\le n-1$.
\end{thm}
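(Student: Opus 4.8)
The plan is to compute the position of each element $w_i$ of the Apéry set directly, i.e. to find the index $x_i$ for which $w_i = \lambda_{x_i}$, and then to take successive differences. The key observation is that for a numerical set $I \in \Gamma_n$, once we know the Apéry set, we know all of $I$: every element of $I$ is uniquely of the form $w_j + nt$ for some $0 \le j \le n-1$ and some $t \ge 0$, because $I$ is closed under adding $n$ and the $w_j$ are the minimal elements of $I$ in each residue class mod $n$. So the elements of $I$ that are strictly less than $w_i$ are exactly the elements $w_j + nt$ with $w_j + nt < w_i$, $t \ge 0$. Counting these gives $x_i$.

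First I would fix $i$ and count, for each residue class $j$, how many elements $w_j + nt$ (with $t \ge 0$) lie strictly below $w_i = nk_i + \pi_i$. Writing $w_j = nk_j + \pi_j$, the condition $w_j + nt < w_i$ becomes $n(k_j + t) + \pi_j < nk_i + \pi_i$. When $\pi_j < \pi_i$ this is $k_j + t \le k_i$, giving $k_i - k_j + 1$ values of $t$ (namely $t = 0, \dots, k_i - k_j$); when $\pi_j > \pi_i$ this is $k_j + t \le k_i - 1$, giving $k_i - k_j$ values; and when $j = i$ we get $k_i - k_i = 0$ as expected (count strictly below). Therefore
\begin{equation*}
x_i = \sum_{\pi_j < \pi_i}(k_i - k_j + 1) + \sum_{\pi_j > \pi_i}(k_i - k_j),
\end{equation*}
where both sums range over $0 \le j \le n-1$, $j \ne i$. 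Using $\pi_0 = 0 = k_0$ (since $w_0 = 0$) and the fact that $\pi_0 < \pi_i$ for $i \ge 1$, the number of $j$ with $\pi_j < \pi_i$ is some value I will relate to the conversion vector. Splitting the "$+1$" off, $x_i = (n-1)k_i - \sum_{j \ne i} k_j + |\{j : \pi_j < \pi_i\}|$. The last count is not quite $r_i$ — $r_i$ counts only $j < i$ with $\pi_j < \pi_i$ — but when we pass to the \emph{difference} $x_i - x_{i-1}$ the discrepancy should telescope into exactly $r_i - r_{i-1}$, which is the content I need to verify.

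Next I would compute $x_i - x_{i-1}$. The term $(n-1)k_i - \sum_{j \ne i}k_j$ rearranges to $nk_i - \sum_{j=0}^{n-1}k_j$, so its difference across consecutive $i$ is $n(k_i - k_{i-1})$. It remains to show that $|\{j : \pi_j < \pi_i\}| - |\{j : \pi_j < \pi_{i-1}\}|$ equals $i(k_i - k_{i-1}) - n(k_i - k_{i-1}) + (r_i - r_{i-1})$ — wait, that is not right; more carefully, since $x_i - x_{i-1} = v_i$ should be $i(k_i - k_{i-1}) + (r_i - r_{i-1})$, and I already have $n(k_i - k_{i-1})$ from the first part, I actually need the full count $|\{j : \pi_j < \pi_i\}|$ to be reorganized differently: I should instead count, among the elements below $w_i$, how the ordering of the $w_j$'s themselves contributes. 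The cleaner bookkeeping is to write $x_i$ in terms of the \emph{sorted} list: among the $n$ residue representatives, exactly $i$ of them are the $w_0, \dots, w_{i-1}$ (those below $w_i$ in value, by the ordering hypothesis $w_0 < \dots < w_{n-1}$) and $n-1-i$ are above. So in the first sum ($\pi_j < \pi_i$) the index $j$ ranges over a subset of $\{0,\dots,n-1\}$ of size $|\{j : \pi_j < \pi_i\}|$, but I can also reorganize by whether $j < i$ or $j > i$: for $j < i$, $w_j < w_i$; the contribution is $(k_i - k_j + 1)$ if $\pi_j < \pi_i$ (there are $r_i$ such $j$) and $(k_i - k_j)$ if $\pi_j > \pi_i$ (there are $(i - r_i)$ such $j$, using $j \ne i$ automatically). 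For $j > i$, $w_j > w_i$ so $k_j \ge k_i$ and the element $w_j + 0 \cdot n$ already exceeds $w_i$; one checks these contribute $0$ in either case (if $\pi_j < \pi_i$ then $k_j > k_i$ is forced, so $k_i - k_j + 1 \le 0$, and in fact the count of valid $t\ge 0$ is $\max(0, k_i - k_j + 1) = 0$; similarly for $\pi_j > \pi_i$). Hence
\begin{equation*}
x_i = \sum_{\substack{j < i \\ \pi_j < \pi_i}}(k_i - k_j + 1) + \sum_{\substack{j < i \\ \pi_j > \pi_i}}(k_i - k_j) = i\,k_i - \sum_{j<i}k_j + r_i.
\end{equation*}

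Finally, with the clean formula $x_i = i k_i - \sum_{j<i} k_j + r_i$ for $1 \le i \le n-1$ (and $x_0 = 0$, consistent since $k_0 = r_0 = 0$ with the empty sum), the difference is
\begin{equation*}
x_i - x_{i-1} = i k_i - (i-1)k_{i-1} - k_{i-1} + r_i - r_{i-1} = i(k_i - k_{i-1}) + (r_i - r_{i-1})
\end{equation*}
for $2 \le i \le n-1$, and $x_1 - x_0 = x_1 = k_1 + r_1 = k_1 + 1$ since $r_1 = |\{j : j < 1, \pi_j < \pi_1\}| = |\{0\}| = 1$ (as $\pi_0 = 0 < \pi_1$). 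This is exactly $\pv_n(I) = (v_1, \dots, v_{n-1})$ as claimed. The main obstacle I anticipate is the careful justification that indices $j > i$ contribute nothing and that $\max(0, k_i - k_j + 1)$ genuinely collapses correctly there — this rests on the monotonicity $w_0 < w_1 < \dots < w_{n-1}$ translating into the right inequalities between the $k$'s and $\pi$'s — and, relatedly, making sure the edge case $i = 1$ and the empty-sum conventions line up. Everything else is the bookkeeping of splitting a single sum according to the sign of $\pi_j - \pi_i$ and the order of $j$ versus $i$.
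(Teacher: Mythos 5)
Your proposal is correct and follows essentially the same route as the paper: count the elements of $I$ lying strictly below each Ap\'ery element $w_i$ to obtain its position $x_i = ik_i - \sum_{j<i}k_j + r_i + 1$, then take successive differences. The only quibble is a harmless indexing convention: you absorb the residue $\pi_0 = 0$ into your count (so your ``$r_1$'' equals $1$ rather than $0$ as the definition of the conversion vector of $[\pi_1\cdots\pi_{n-1}]$ dictates), which shifts every $x_i$ by one uniformly and therefore cancels in the differences $v_i = x_i - x_{i-1}$, leaving the stated formulas intact.
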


\begin{proof}
Let $I = \{\lambda_0,\lambda_1,\dots\}$ and $w_i = \lambda_{x_i}$. Since $0=w_0= \lambda_{x_0}$, we have $x_0=0$. Next we show that $x_i = ik_i - \sum_{j=0}^{i-1} k_j +r_i+1$, for $1\le i\le n-1$. To do this, we compute the number of elements in $I$ that are strictly less than $nk_i$ for $1\le i\le n-1$. The sequence $(k_0,k_1,\dots,k_{n-1})$ is non-decreasing, and we set $l$ to be the largest index such that $k_i = k_l$. Now $s\in I$ and $nk_l \le s \le nk_l +(n-1)$ if and only if $s = \lambda_{x_j} + n(k_l - k_j)$ for some $0\le j \le l$. Thus \begin{eqnarray}
|\{0,1,2,\dots,nk_i -1\} \cap I | &=& |\{0,1,2,\dots,nk_l -1\} \cap I |\nonumber\\
&=& \sum_{j=0}^{l} (k_l - k_j ) \nonumber\\
&=& \sum_{j=0}^{i-1} (k_i - k_j )\nonumber\\
&=& ik_i - \sum_{j=0}^{i-1} k_j. \nonumber
\end{eqnarray} Next, $s\in I$ and $nk_i \le s < \lambda_{x_i}$ if and only if $s = \lambda_{x_j} + n(k_i - k_j) = nk_i + \pi_j,$ for some $0\le j < i$ and $\pi_j < \pi_i$. There are $r_i+1$ such elements. Therefore, we have
\begin{eqnarray}
x_i &=& |\{\lambda_0,\lambda_1,\dots,\lambda_{x_i-1}\}|\nonumber\\
&=& ik_i - \sum_{j=0}^{i-1} k_j +r_i+1\nonumber
\end{eqnarray} Recall that, by definition, $v_i = x_i - x_{i-1}$. Thus, the result now follows.
\end{proof}

Now we show that $f: \Gamma_n \mapsto \N^{n-1}$ such that $f(I) = \pv_n(I)$ is a one-to-one correspondence by constructing the inverse map. 

\begin{setup}\label{setup: v to I}
Let $v=(v_1,v_2,\dots,v_{n-1}) \in \N^{n-1}$. We will define two recursive sequences as follows:
\begin{enumerate}
\item $t_1 = 0$ and $t_i = (v_i+t_{i-1}) \bmod i,$ for $2\le i\le n-1$.
\item $l_1 = v_1 -1$ and $l_i = l_{i-1} + \frac{v_i + t_{i-1} - t_i}{i},$ for $2\le i\le n-1$.
\end{enumerate}

\noindent Notice that $0\le t_i\le i-1$ for $1\le i\le n-1$. Thus, $(t_1,t_2,\dots,t_{n-1})$ is the conversion vector of a permutation $\sigma =[\sigma_1\sigma_2\cdots\sigma_{n-1}]$ according to Proposition~\ref{prop: permutation 1-1 conversion}. Now let $$ \mathcal A_v = \{0\} \cup \{nl_i + \sigma_i : 1 \le i\le n-1\}.$$ Since $\sigma$ is a permutation on the set $\{1,\dots,n-1\}$, $\mathcal A_v$ is a complete residue system modulo $n$ that contains $0$. Thus, $\mathcal A_v$ generates a numerical set $I(\mathcal A_v) = \{w + kn : w\in A_v, k\ge 0\}$ in $\Gamma_n$. We set $g: \N^{n-1} \mapsto \Gamma_n$ such that $g(v) = I(\mathcal A_v)$.
\end{setup}

\begin{thm}\label{thm: ideal and vector}
The functions $f : \Gamma_n \mapsto \N^{n-1}$ given by $f(I) = pv_n(I)$ and $g: \N^{n-1} \mapsto \Gamma_n$ by $g(v) = I(\mathcal A_v)$ are inverse functions. Therefore, the function $f$ is a one-to-one correspondence between numerical sets closed under addition by the element $n$ and ($n-1$)-tuples of positive integers.
\end{thm}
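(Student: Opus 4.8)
The plan is to show directly that $f(g(v))=v$ for every $v\in\N^{n-1}$ and that $g(f(I))=I$ for every $I\in\Gamma_n$; the displayed ``Therefore'' in the statement is then immediate. Both verifications run through Theorem~\ref{thm: from ap to v}, which translates between an Ap\'ery set and a position vector, and through Proposition~\ref{prop: permutation 1-1 conversion}, which recovers a permutation from its conversion vector. The one structural remark needed repeatedly is: for $I\in\Gamma_n$ the elements of $I$ lying in a fixed residue class modulo $n$ form an arithmetic progression of common difference $n$ whose least term is the representative of that class in $\Ap(I,n)$; consequently $I=\{w+kn: w\in\Ap(I,n),\ k\ge 0\}$, and conversely, if $\mathcal{A}\subseteq\N_0$ is a complete residue system modulo $n$ containing $0$, then $I(\mathcal{A})=\{w+kn:w\in\mathcal{A},\ k\ge0\}$ belongs to $\Gamma_n$ and satisfies $\Ap(I(\mathcal{A}),n)=\mathcal{A}$. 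In particular, since $\mathcal{A}_v$ is (as noted in Setup~\ref{setup: v to I}) such a system, $\Ap(I(\mathcal{A}_v),n)=\mathcal{A}_v$.

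For $g(f(I))=I$: write $\Ap(I,n)=\{w_0,\dots,w_{n-1}\}$ in increasing order with $w_i=nk_i+\pi_i$, $0\le\pi_i<n$, and let $r$ be the conversion vector of $\pi=[\pi_1\cdots\pi_{n-1}]$, so Theorem~\ref{thm: from ap to v} gives $v:=\pv_n(I)$ with $v_1=k_1+1$ and $v_i=i(k_i-k_{i-1})+(r_i-r_{i-1})$. Feeding this $v$ into Setup~\ref{setup: v to I}, an induction on $i$ shows $t_i=r_i$ and $l_i=k_i$: indeed $l_1=v_1-1=k_1$ and $t_1=0=r_1$, and for $i\ge 2$ one has $v_i+t_{i-1}=i(k_i-k_{i-1})+r_i$, whence $t_i=(v_i+t_{i-1})\bmod i=r_i$ (since $0\le r_i\le i-1$) and then $l_i=l_{i-1}+\frac{v_i+t_{i-1}-t_i}{i}=k_{i-1}+(k_i-k_{i-1})=k_i$. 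Thus $\sigma$ has conversion vector $r$, so $\sigma=\pi$ by Proposition~\ref{prop: permutation 1-1 conversion}, and hence $\mathcal{A}_v=\{0\}\cup\{nl_i+\sigma_i\}=\{nk_i+\pi_i : 0\le i\le n-1\}=\Ap(I,n)$. Therefore $g(f(I))=I(\Ap(I,n))=I$.

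For $f(g(v))=v$: given $v$, form $t_i$, $l_i$, $\sigma$ and $\mathcal{A}_v$ as in Setup~\ref{setup: v to I}. The point requiring care is that the listing $0< nl_1+\sigma_1< nl_2+\sigma_2<\cdots< nl_{n-1}+\sigma_{n-1}$ is strictly increasing, so that $\Ap(I(\mathcal{A}_v),n)=\mathcal{A}_v$ is presented to Theorem~\ref{thm: from ap to v} in the correct order. Since $v_i\ge 1$ and $t_i=(v_i+t_{i-1})\bmod i$, the number $v_i+t_{i-1}-t_i$ is a nonnegative multiple of $i$, so $l_{i-1}\le l_i$ always. If $l_{i-1}<l_i$ then $n(l_i-l_{i-1})\ge n>|\sigma_i-\sigma_{i-1}|$ and the inequality is clear; if $l_{i-1}=l_i$ then $t_i=v_i+t_{i-1}>t_{i-1}$, and if in addition $\sigma_i<\sigma_{i-1}$ then $h=i-1$ does not contribute to $t_i=|\{h<i:\sigma_h<\sigma_i\}|$ while $\{h\le i-2:\sigma_h<\sigma_i\}\subseteq\{h\le i-2:\sigma_h<\sigma_{i-1}\}$, forcing $t_i\le t_{i-1}$, a contradiction; hence $\sigma_i>\sigma_{i-1}$. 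With the order established, apply Theorem~\ref{thm: from ap to v} to $I(\mathcal{A}_v)$ using $w_i=nl_i+\sigma_i$, so that $k_i=l_i$, $\pi_i=\sigma_i$, and the relevant conversion vector is $(t_1,\dots,t_{n-1})$; the formulas of that theorem return first component $l_1+1=v_1$ and $i$-th component $i(l_i-l_{i-1})+(t_i-t_{i-1})=(v_i+t_{i-1}-t_i)+(t_i-t_{i-1})=v_i$, i.e.\ $\pv_n(I(\mathcal{A}_v))=v$.

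I expect the two inductive identities $t_i=r_i$, $l_i=k_i$ to be routine bookkeeping with the recursions of Setup~\ref{setup: v to I}. The genuine obstacle is the monotonicity step in the previous paragraph: one must rule out the case $l_{i-1}=l_i$ with $\sigma_i<\sigma_{i-1}$, since otherwise the elements $nl_i+\sigma_i$ fail to increase with $i$, the permutation read off from $\Ap(I(\mathcal{A}_v),n)$ in increasing order differs from $\sigma$, its conversion vector need not equal $(t_1,\dots,t_{n-1})$, and Theorem~\ref{thm: from ap to v} no longer returns $v$. Deriving this from the conversion-vector definition is the crux of the argument.
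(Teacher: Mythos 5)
Your proof is correct and follows essentially the same route as the paper: the identity $g(f(I))=I$ is established exactly as in the paper's argument, by matching the recursions to show $t_i=r_i$ and $l_i=k_i$ and invoking Proposition~\ref{prop: permutation 1-1 conversion} to conclude $\sigma=\pi$. The only divergence is that the paper dismisses $f(g(v))=v$ with ``similarly,'' whereas you actually carry it out, correctly isolating the one nontrivial point --- that $nl_1+\sigma_1<\cdots<nl_{n-1}+\sigma_{n-1}$, so that the permutation read off from $\Ap(I(\mathcal A_v),n)$ in increasing order really is $\sigma$ --- and your case analysis ($l_{i-1}<l_i$ versus $l_{i-1}=l_i$ forcing $t_i>t_{i-1}$ and hence $\sigma_i>\sigma_{i-1}$) settles it.
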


\begin{proof}
Let $I\in \Gamma_n$ with $\Ap(I,n) = \{0,w_1,\dots,w_{n-1}\}$, and write $w_i =nk_i + \pi_i$, where $0\le \pi_i <n$. Also let $v = \pv_n(I)$ and $\mathcal A_v =\{0\} \cup \{nl_i + \sigma_i : 0\le i\le n-1\}$ as defined in Setup~\ref{setup: v to I}. It suffices to show that $\pi_i = \sigma_i$ and $k_i=l_i$ for all $1\le i\le n-1$. 

As noted before, $[\pi_1\pi_2\cdots\pi_{n-1}]$ is a permutation, and let $r=(r_1,r_2,\dots,r_{n-1})$ be its conversion vector. By Theorem~\ref{thm: from ap to v}, $r_1 = 0$, and $ r_i = (v_i + r_{i-1}) - i(k_i-k_{i-1}). $ Since $0\le r_i \le i-1$, it follows that $r_i = (v_i + r_{i-1}) \bmod i,$ for $1\le i\le n-1$. Referring to Setup~\ref{setup: v to I}, we find that $r_i = t_i$, for $1\le i\le n-1$. Thus, $r$ is the conversion vector of both $[\pi_1\pi_2\cdots\pi_{n-1}]$ and $[\sigma_1\sigma_2\cdots\sigma_{n-1}]$, and we conclude the two permutations are equal.

Again, by Theorem~\ref{thm: from ap to v} and Setup~\ref{setup: v to I}, $k_1 = v_1 - 1 = l_1$, and for $2\le i\le n-1$,
\begin{eqnarray*}
k_i -k_{i-1} &=&  \frac{v_i+r_{i-1}- r_i}{i}\\
&=& \frac{v_i+t_{i-1}- t_i}{i}\\
&=& l_i - l_{i-1}
\end{eqnarray*} 
We conclude that $k_i = l_i$, for $1\le i\le n-1$. This shows that $g\circ f = id_{\Gamma_n}$, and similarly, we obtain that $f\circ g = id_{\N^{n-1}}$. Therefore, the function $f$ is a one-to-one correspondence.
\end{proof}

The next corollary is really a restatement of Theorem~\ref{thm: ideal and vector}.

\begin{cor}\label{cor: ideal and vector}
Every numerical set closed under addition by the element $n\in \N$ has a unique position vector of length $n-1$. In particular, no two numerical semigroups have the same position vector. Moreover, every vector of length $n-1$ with entries in $\N$ is the position vector of a numerical set closed under addition by the element $n$.
\end{cor}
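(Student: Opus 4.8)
The plan is to read off all three assertions directly from the bijectivity of $f$ established in Theorem~\ref{thm: ideal and vector}, using that $\Gamma_n$ is precisely the class of numerical sets closed under addition by $n$ and that $f\colon \Gamma_n \to \N^{n-1}$, $f(I)=\pv_n(I)$, is a well-defined function (noted just before Definition~\ref{defn: conversion}).

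For the first sentence: if $I$ is a numerical set closed under addition by $n$ then $I\in\Gamma_n$ by definition, so $\pv_n(I)=f(I)$ is a vector of length $n-1$ and, $f$ being a function, it is uniquely determined by $I$; since any other position vector of $I$ is taken with respect to a different element of $I$ and hence has a different length, the length-$(n-1)$ position vector of $I$ is unambiguous. For the last sentence: given $v\in\N^{n-1}$, Theorem~\ref{thm: ideal and vector} supplies $g(v)\in\Gamma_n$ with $f(g(v))=v$, i.e. $v=\pv_n\big(g(v)\big)$, so $v$ is the position vector (with respect to $n$) of the numerical set $g(v)$, which is closed under addition by $n$.

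For the middle sentence, suppose $S$ and $T$ are numerical semigroups with $\pv_n(S)=\pv_m(T)$. Equality of tuples forces $n-1=m-1$, hence $n=m$. Since $S$ is a semigroup containing $n$, it is closed under addition by $n$, so $S\in\Gamma_n$; likewise $T\in\Gamma_n$. Then $f(S)=f(T)$, and the injectivity of $f$ — the content of $g\circ f=\mathrm{id}_{\Gamma_n}$ in Theorem~\ref{thm: ideal and vector} — yields $S=T$. The only step requiring a moment's care is this last one: the hypothesis "having the same position vector" must be unpacked as an equality of tuples, which first pins down a common modulus $n$ before injectivity of $f$ can be invoked; beyond that, the corollary is a direct transcription of Theorem~\ref{thm: ideal and vector}, so no genuine obstacle is expected.
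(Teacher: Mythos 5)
Your proposal is correct and matches the paper's treatment: the paper explicitly presents this corollary as a restatement of Theorem~\ref{thm: ideal and vector}, and your argument is just the careful unpacking of that restatement (bijectivity of $f$ plus the observation that position vectors with respect to different elements have different lengths). No issues.
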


The next example demonstrates what we have developed in this section.
 
\begin{ex}  Let $S = \langle 6, 16,20,21,29\rangle$. This is a numerical semigroup containing $6$, and hence $S\in \Gamma_6$. We can compute $$\Ap(S,6) = \{0, 16, 20, 21, 29, 37\} = \{0, 6(2) + 4, 6(3) + 2, 6(3)+3, 6(4)+5, 6(6)+1\}.$$ The permutation $[42351]$ has conversion vector $(0,0,1,3,0)$. Thus,
\begin{eqnarray*}
v_1 &=& 2+1 = 3\\
v_2 &=& 2(3-2) + (0-0) = 2\\
v_3 &=& 3(3-3) + (1-0) = 1\\
v_4 &=& 4(4-3) + (3-1) = 6\\
v_5 &=& 5(6-4) + (0-3) = 7.\\
\end{eqnarray*}
So, we have $\pv_6(S) = (3,2,1,6,7)$.

Conversely, suppose we start with $v=(3,2,1,6,7) \in \N^{5}$. According to Setup~\ref{setup: v to I} and Theorem~\ref{thm: ideal and vector}
, the $r_i$'s are $(0,0,1,3,0)$ and the $k_i$'s $(2,3,3,4,6)$. From the conversion vector $(0,0,1,3,0)$, we construct the corresponding permutation $[42351]$ (see Example~\ref{ex: permutation and conversion}). Now,
\begin{eqnarray*}
w_0 &=&  0\\
w_1 &=& 6(2)+ 4 = 16\\
w_2 &=& 6(3)+ 2 = 20\\
w_3 &=& 6(3)+ 3 = 21\\
w_4 &=& 6(4)+ 5 = 29\\
w_5 &=& 6(6)+ 1 = 37.\\
\end{eqnarray*}
Thus, $S = \{w_i + c_in : c_i \ge 0\} =\langle 6, 16,20,21,29\rangle$. 
\end{ex}

\section{Position vectors of numerical semigroups}\label{pv and ns}

Now that we have a one-to-one correspondence between $\N^{n-1}$ and numerical sets closed under addition by $n$ established in the previous section, we want to know which position vectors correspond to semigroups. We provide a method for solving this problem and give an explicit answer for semigroups containing small numbers. 

We begin with a necessary and sufficient condition for a complete residue system modulo $n$ that contains 0 to be the Ap\'ery set of a numerical semigroup. This result is similar to others contained in \cite{BGGR,K}, and the proof is omitted.

\begin{lem}\label{lem: crs and ns}
Let $\mathcal A =\{w_0,w_1,\dots,w_{n-1}\}$ be a complete residue system modulo $n$ that contains $0$, where $w_0<w_1<\cdots< w_{n-1}$. Then, $I(\mathcal A) = \{w + kn : w\in \mathcal A, k\ge 0\}$ is a numerical semigroup if and only if $w_i+w_j \ge w_l$ whenever $w_i+w_j \equiv w_l \bmod n$ with $0<i\le j < l$.
\end{lem}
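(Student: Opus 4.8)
The plan is to prove Lemma~\ref{lem: crs and ns} directly from the definitions, treating the two directions separately but using the same underlying computation.

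First I would unwind what it means for $I(\mathcal A)$ to be a numerical semigroup. Since $\mathcal A$ is a complete residue system modulo $n$ containing $0$, and $I(\mathcal A) = \{w + kn : w \in \mathcal A,\ k \ge 0\}$, every element of $I(\mathcal A)$ is uniquely of the form $w_i + kn$ with $0 \le i \le n-1$ and $k \ge 0$; moreover $w_i + kn \in I(\mathcal A)$ precisely when $w_i$ is the (unique) representative of its residue class in $\mathcal A$ and $kn$ is large enough that $w_i + kn \ge w_i$, which is automatic. So membership of $m$ in $I(\mathcal A)$ is equivalent to: if $w_l$ is the element of $\mathcal A$ congruent to $m$ mod $n$, then $m \ge w_l$. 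Closure under addition then amounts to: for all $i,j$ and all $k,k' \ge 0$, the sum $(w_i + kn) + (w_j + k'n) = w_i + w_j + (k+k')n$ lies in $I(\mathcal A)$. Writing $w_l$ for the representative of the class of $w_i + w_j$, this holds iff $w_i + w_j + (k+k')n \ge w_l$ for all $k, k' \ge 0$, and since the left side is minimized at $k = k' = 0$, closure under addition of $I(\mathcal A)$ is equivalent to $w_i + w_j \ge w_l$ for all $0 \le i, j \le n-1$, where $w_l \equiv w_i + w_j \pmod n$. Also recall from the earlier remark in Section~2 that a numerical set is a numerical semigroup exactly when it is closed under addition, so $I(\mathcal A)$ (which is already a numerical set by construction) is a numerical semigroup iff it is closed under addition.

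Next I would reduce the index range from ``all $i, j$'' to the stated ``$0 < i \le j < l$.'' The condition is symmetric in $i$ and $j$, so we may assume $i \le j$. If $i = 0$ then $w_i + w_j = w_j = w_l$ and the inequality is an equality, hence trivially true; so we may assume $0 < i$. If $l \le j$ (in particular if $l = i$ or $l = j$), then because the $w$'s are increasing we have $w_i + w_j \ge w_j \ge w_l$ (using $w_i \ge w_1 > w_0 = 0$), so again the inequality holds automatically. Thus the only nontrivial constraints are exactly those with $0 < i \le j < l$, and $I(\mathcal A)$ is closed under addition iff $w_i + w_j \ge w_l$ for all such triples. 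Combining with the previous paragraph gives the lemma in both directions.

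I do not expect a serious obstacle here — the content is entirely bookkeeping about residue classes — but the one place to be careful is the uniqueness of the representation $m = w_i + kn$ and the claim that testing closure on the single pair of representatives $(w_i, w_j)$ suffices rather than on all shifted elements; this is where the hypothesis $I(\mathcal A) \in \Gamma_n$ (closure under adding $n$) is silently used, so I would state that minimization-at-$k=k'=0$ step explicitly. Finally, since the paper says the proof is omitted, I would in practice either present just this outline or cite \cite{BGGR,K} for the analogous statements; the above is the argument one would write out if a proof were wanted.
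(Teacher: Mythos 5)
Your argument is correct. Both reductions are sound: membership $m\in I(\mathcal A)$ is indeed equivalent to $m\ge w_l$ for the representative $w_l$ of $m$'s class, so closure under addition reduces to the $k=k'=0$ case, i.e.\ to $w_i+w_j\ge w_l$ for all pairs; and the discarded index ranges ($i=0$, where $w_0=0$ gives equality, and $l\le j$, where $w_i+w_j\ge w_j\ge w_l$ is automatic) are exactly the vacuous ones, leaving the stated condition $0<i\le j<l$. The paper omits its proof of this lemma (deferring to \cite{BGGR,K}), so there is nothing to compare against, but what you have written is the standard argument one would supply; the only cosmetic quibble is that $I(\mathcal A)\in\Gamma_n$ is automatic from the construction rather than a ``silently used hypothesis.''
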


We can now translate Lemma~\ref{lem: crs and ns} into a condition concerning the position vector, but first we need a few preliminary results.

\begin{lem}\label{lem: gamma guy}
Let $I \in \Gamma_n$ be a numerical set with Ap\'ery set $\Ap(I,n)=\{w_0,w_1,\dots,w_{n-1}\}$, where $w_0<w_i<\dots<w_{n-1}$. We set $w_i=nk_i+\pi_i$ for $0\le i\le n-1$. If $v=(v_1,\dots,v_{n-1})$ is the position vector of $I$, then $k_1 = v_1-1$ and $$k_i-k_{i-1} = \left\lfloor\frac{v_i-1}{i}\right\rfloor +\gamma_i,$$ where $\gamma_1=0$ and $$\gamma_i = \begin{cases} 
      0 & \textrm{ if $\pi_{i-1} < \pi_i$} \\
      1 & \textrm{ if $\pi_{i-1} > \pi_i$} \\
   \end{cases},$$ for $2\le i\le n-1$.
\end{lem}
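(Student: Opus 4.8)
The plan is to extract the exact value of $k_i-k_{i-1}$ from Theorem~\ref{thm: from ap to v} and then rewrite the correction term coming from the conversion vector as an explicit residue. Let $r=(r_1,\dots,r_{n-1})$ be the conversion vector of the permutation $\pi=[\pi_1\cdots\pi_{n-1}]$, as in the discussion preceding Theorem~\ref{thm: from ap to v}. That theorem gives $v_1=k_1+1$, i.e.\ $k_1=v_1-1$, which is the first assertion; and for $2\le i\le n-1$ it gives $v_i=i(k_i-k_{i-1})+(r_i-r_{i-1})$, so $k_i-k_{i-1}=\frac{v_i-(r_i-r_{i-1})}{i}$ and in particular $r_i-r_{i-1}\equiv v_i\pmod i$. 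Setting $\bar v_i=(v_i-1)\bmod i$, so that $v_i-1=i\left\lfloor\frac{v_i-1}{i}\right\rfloor+\bar v_i$ with $0\le\bar v_i\le i-1$, this becomes $r_i-r_{i-1}\equiv\bar v_i+1\pmod i$. It therefore suffices to show $r_i-r_{i-1}=\bar v_i+1$ when $\pi_{i-1}<\pi_i$ and $r_i-r_{i-1}=\bar v_i+1-i$ when $\pi_{i-1}>\pi_i$: substituting each back into $k_i-k_{i-1}=\frac{v_i-(r_i-r_{i-1})}{i}$ gives $\left\lfloor\frac{v_i-1}{i}\right\rfloor$ and $\left\lfloor\frac{v_i-1}{i}\right\rfloor+1$ respectively, i.e.\ $\left\lfloor\frac{v_i-1}{i}\right\rfloor+\gamma_i$ in both cases.

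To pin $r_i-r_{i-1}$ down I would combine this congruence with the two-sided bounds $0\le r_{i-1}\le i-2$ and $0\le r_i\le i-1$ recorded after Definition~\ref{defn: conversion}, which give $r_i-r_{i-1}\in\{-(i-2),\dots,i-1\}$, together with one genuinely combinatorial observation: isolating the index $j=i-1$ in the set counted by $r_i$ shows $r_i=|\{j:1\le j\le i-2,\ \pi_j<\pi_i\}|+\varepsilon$, where $\varepsilon=1$ if $\pi_{i-1}<\pi_i$ and $\varepsilon=0$ if $\pi_{i-1}>\pi_i$ (these cases are exhaustive since $\pi$ is a permutation). Since $r_{i-1}=|\{j:1\le j\le i-2,\ \pi_j<\pi_{i-1}\}|$, we get $r_i-r_{i-1}=\big(|\{j\le i-2:\pi_j<\pi_i\}|-|\{j\le i-2:\pi_j<\pi_{i-1}\}|\big)+\varepsilon$. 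If $\pi_{i-1}<\pi_i$ then the second counted set is contained in the first and $\varepsilon=1$, so $r_i-r_{i-1}\ge 1$; if $\pi_{i-1}>\pi_i$ then the first is contained in the second and $\varepsilon=0$, so $r_i-r_{i-1}\le 0$.

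I would then finish by cases. If $\pi_{i-1}<\pi_i$, then $r_i-r_{i-1}$ lies in $\{1,\dots,i-1\}$ and is congruent modulo $i$ to $\bar v_i+1\in\{1,\dots,i\}$; comparing ranges, the only possibility is $r_i-r_{i-1}=\bar v_i+1$, which forces $\bar v_i\le i-2$ and is consistent with $\gamma_i=0$. If $\pi_{i-1}>\pi_i$, then $r_i-r_{i-1}$ lies in $\{-(i-2),\dots,0\}$ and is congruent modulo $i$ to $\bar v_i+1$; now the only possibility is $r_i-r_{i-1}=\bar v_i+1-i$, which forces $\bar v_i\ge 1$ and is consistent with $\gamma_i=1$. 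In each case substituting into $k_i-k_{i-1}=\frac{v_i-(r_i-r_{i-1})}{i}$ yields $\left\lfloor\frac{v_i-1}{i}\right\rfloor+\gamma_i$, as desired.

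The step requiring genuine thought is the combinatorial observation tying the sign of $r_i-r_{i-1}$ to the relative order of $\pi_{i-1}$ and $\pi_i$; the rest is division-with-remainder bookkeeping and range-chasing. One point to be careful about is that the degenerate residues ($\bar v_i=i-1$ in the first case, $\bar v_i=0$ in the second) are \emph{ruled out} by these range comparisons rather than assumed away — which is precisely why a two-valued $\gamma_i$ is enough.
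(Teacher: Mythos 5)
Your proof is correct and follows essentially the same route as the paper's: both start from the identity $v_i = i(k_i-k_{i-1})+(r_i-r_{i-1})$ of Theorem~\ref{thm: from ap to v}, use the fact that the sign of $r_i-r_{i-1}$ is determined by the relative order of $\pi_{i-1}$ and $\pi_i$, and pin the value down via division with remainder using the bounds $0\le r_j\le j-1$. The only differences are organizational --- you solve for $r_i-r_{i-1}$ as an explicit residue and substitute back, whereas the paper checks directly that $i\gamma_i+r_i-r_{i-1}-1$ lies in $[0,i)$ --- and you additionally spell out the containment argument for the sign of $r_i-r_{i-1}$, which the paper asserts from the definition of the conversion vector without proof.
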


\begin{proof}
It follows from Theorem~\ref{thm: from ap to v} that $k_1 = v_1-1$ and $v_i = i(k_i-k_{i-1}) +(r_i-r_{i-1}),$ for $2\le i\le n-1$, where $r=(r_1,\dots,r_{n-1})$ is the conversion vector of $\pi = [\pi_1\cdots\pi_{n-1}]$. We rewrite this as $v_i-1 = i(k_i-k_{i-1}-\gamma_i) +(i\gamma_i+r_i-r_{i-1}-1)$. If we show that $0\le i\gamma_i+r_i-r_{i-1}-1<i$ whenever $2\le i\le n-1$, then it follows that $$k_i-k_{i-1} = \left\lfloor\frac{v_i-1}{i}\right\rfloor +\gamma_i.$$ First suppose that $r_i > r_{i-1}$. By the definition of the conversion vector, we have $\pi_{i-1}< \pi_i $ and recall that $0\le r_j\le j-1$ for all $1\le j\le n-1$. Thus, $\gamma_i=0$ and $0\le r_i-r_{i-1}-1 \le i-2$. Next, suppose that $r_i \le r_{i-1}$ so that $\pi_{i-1} > \pi_i$. Then $\gamma_i=1$ and $1\le i +r_i-r_{i-1}-1\le i-1$, which completes the proof.
\end{proof}

The next proposition will lead to a convenient equivalence relation on the elements of $\N^{n-1}$, i.e., the position vectors of numerical sets in $\Gamma_n$.

\begin{prop}\label{prop: perm class}
Let $v = (v_1,\dots,v_m)$ and $z = (z_1,\dots,z_m)$ be two position vectors with associated conversion vectors and permutations denoted by $r$ and $\pi$ for $v$, and $s$ and $\sigma$ for $z$. Then the following are equivalent:
\begin{enumerate}
\item $v_i \equiv z_i \bmod i$ for all $1\le i\le m$ 
\item $r = s$
\item $\pi = \sigma$. 
\end{enumerate}
\end{prop}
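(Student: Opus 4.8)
The plan is to prove the equivalences in the cycle $(2) \Leftrightarrow (3) \Rightarrow (1) \Rightarrow (2)$. The equivalence $(2) \Leftrightarrow (3)$ is immediate from Proposition~\ref{prop: permutation 1-1 conversion}: a conversion vector determines a unique permutation and conversely a permutation determines its conversion vector, so $r = s$ if and only if $\pi = \sigma$. This requires no work beyond citing the earlier result.

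For $(3) \Rightarrow (1)$, I would use Theorem~\ref{thm: from ap to v}, or rather the recursion extracted from it in the proof of Theorem~\ref{thm: ideal and vector}: if $v$ is a position vector with conversion vector $r$, then $r_1 = 0$ and $r_i = (v_i + r_{i-1}) \bmod i$ for $2 \le i \le m$. Equivalently $v_i \equiv r_i - r_{i-1} \pmod i$ (with $r_0 := 0$). So if $\pi = \sigma$, then $r = s$, and for each $i$ we get $v_i \equiv r_i - r_{i-1} \equiv s_i - s_{i-1} \equiv z_i \pmod i$, giving (1).

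For $(1) \Rightarrow (2)$, I would argue by induction on $i$ that $r_i = s_i$. The base case $r_1 = 0 = s_1$ is trivial. For the inductive step, assume $r_{i-1} = s_{i-1}$; then since $v_i \equiv z_i \pmod i$ we have $r_i = (v_i + r_{i-1}) \bmod i = (z_i + s_{i-1}) \bmod i = s_i$. Hence $r = s$, which is (2).

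I do not expect a serious obstacle here; the proposition is essentially a repackaging of the recursion already established in Theorems~\ref{thm: from ap to v} and \ref{thm: ideal and vector} together with Proposition~\ref{prop: permutation 1-1 conversion}. The one point requiring a small amount of care is making sure the recursion $r_i = (v_i + r_{i-1}) \bmod i$ is correctly quoted for a genuine position vector — it was derived in the proof of Theorem~\ref{thm: ideal and vector} from the identity $r_i = (v_i + r_{i-1}) - i(k_i - k_{i-1})$ combined with $0 \le r_i \le i - 1$ — and that nothing about it depends on $v$ arising from a numerical \emph{semigroup} rather than a numerical set, which it does not. With that observed, all three implications are short.
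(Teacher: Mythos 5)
Your proposal is correct and follows essentially the same route as the paper: the equivalence of (2) and (3) via Proposition~\ref{prop: permutation 1-1 conversion}, the recursion $r_i = (v_i + r_{i-1}) \bmod i$ from Setup~\ref{setup: v to I} for $(1)\Rightarrow(2)$, and the congruence $v_i \equiv r_i - r_{i-1} \bmod i$ from Theorem~\ref{thm: from ap to v} for the reverse direction. The only difference is cosmetic (you close the cycle through (3) and spell out the induction), and your remark that nothing depends on the numerical set being a semigroup is accurate.
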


\begin{proof}
For (1) implies (2), by Setup~\ref{setup: v to I}, the conversion vector depends on the position vector modulo $i$ for the $i$-th entry. The equivalence of (2) and (3) follows from Proposition~\ref{prop: permutation 1-1 conversion}. Lastly, (2) implies (1) since, according to Theorem~\ref{thm: from ap to v}, $v_i \equiv r_i -r_{i-1} \bmod i$.
\end{proof}

\begin{defn}
We say two elements $v=(v_1,\dots,v_m)$ and $z=(z_1,\dots,z_m)$ of $\N^m$ are {\em congruent}, denoted by $v \sim z$, if $v_i\equiv z_i \bmod i$ for all $1\le i\le m$. In this case, $v$ and $z$ have the same associated permutation $\pi$ and are said to be in the {\em permutation class} defined by $\pi$.
\end{defn}

We can now present the main result of this section.

\begin{thm}\label{thm: pv for ns}
Let $(v_1,v_2,\dots,v_m)$ be a vector of positive integers in the permutation class defined by $[\pi_1\pi_2\cdots\pi_m]$. Also set $\gamma_i$ as in Lemma~\ref{lem: gamma guy} and $$ u_i = \left\lfloor\frac{v_i-1}{i}\right\rfloor.$$ Then $(v_1,v_2,\dots,v_m)$ is the position vector of a numerical semigroup if and only if $$\sum_{x=1}^i (u_x+\gamma_x) + \frac{\pi_i+\pi_j-\pi_l}{m+1} \ge \sum_{x=j+1}^l (u_x+\gamma_x),$$ whenever $0<i\le j<l$ and $\pi_i+\pi_j \equiv \pi_l \bmod (m+1)$.
\end{thm}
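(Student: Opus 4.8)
The plan is to translate the arithmetic condition of Lemma~\ref{lem: crs and ns} directly into a statement about the position vector, using Lemma~\ref{lem: gamma guy} as the bridge. Set $n = m+1$, and let $I \in \Gamma_n$ be the numerical set with $\pv_n(I) = (v_1,\dots,v_m)$; write its Ap\'ery set as $\{w_0,w_1,\dots,w_m\}$ with $w_i = nk_i + \pi_i$ in increasing order. By Theorem~\ref{thm: ideal and vector} the pair (permutation $\pi$, sequence of $k_i$'s) is determined by $v$, and by Proposition~\ref{prop: perm class} the permutation is exactly the one labelling the permutation class of $v$. By Lemma~\ref{lem: crs and ns}, $I$ is a numerical semigroup if and only if $w_i + w_j \ge w_l$ whenever $0 < i \le j < l$ and $w_i + w_j \equiv w_l \pmod n$. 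Since $w_i + w_j \equiv \pi_i + \pi_j$ and $w_l \equiv \pi_l \pmod n$, the congruence condition $w_i + w_j \equiv w_l \pmod n$ is the same as $\pi_i + \pi_j \equiv \pi_l \pmod n$, which depends only on $\pi$ — so the \emph{set} of triples $(i,j,l)$ to be checked is fixed by the permutation class.

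The substance is the inequality. For such a triple, write $w_i + w_j - w_l = n(k_i + k_j - k_l) + (\pi_i + \pi_j - \pi_l)$. Because $0 < i \le j < l$ we have $\pi_i + \pi_j - \pi_l \equiv 0 \pmod n$ and $1 - n \le \pi_i + \pi_j - \pi_l \le 2n - 2$ is not quite tight enough; rather, observe $\pi_i+\pi_j-\pi_l$ is a multiple of $n$ lying strictly between $-n$ and $2n$, so it equals $0$ or $n$, and in either case $(\pi_i+\pi_j-\pi_l)/n \in \{0,1\}$ is a well-defined integer. Dividing the inequality $w_i + w_j \ge w_l$ by $n$ (legitimate since $n>0$ and the remainder term is an integer multiple of $n$) gives the equivalent statement
\[
k_i + k_j - k_l + \frac{\pi_i + \pi_j - \pi_l}{n} \ge 0.
\]
Now telescope using Lemma~\ref{lem: gamma guy}: $k_0 = 0$ and $k_p = \sum_{x=1}^p (k_x - k_{x-1}) = \sum_{x=1}^p (u_x + \gamma_x)$ for each $p$, where $u_x = \lfloor (v_x-1)/x\rfloor$ and $\gamma_x$ is as defined. (Here one must check the base case carefully: Lemma~\ref{lem: gamma guy} gives $k_1 = v_1 - 1 = u_1 + \gamma_1$ since $u_1 = v_1 - 1$ and $\gamma_1 = 0$, so the formula $k_p = \sum_{x=1}^p(u_x+\gamma_x)$ holds uniformly for $p\ge 1$, and also for $p=0$ as an empty sum.) Substituting $k_i = \sum_{x=1}^i(u_x+\gamma_x)$, $k_j = \sum_{x=1}^j(u_x+\gamma_x)$, $k_l = \sum_{x=1}^l(u_x+\gamma_x)$ and rearranging $k_i + k_j - k_l = \sum_{x=1}^i(u_x+\gamma_x) - \sum_{x=j+1}^l(u_x+\gamma_x)$ yields exactly the claimed inequality. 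Running this equivalence over all admissible triples $(i,j,l)$ completes the proof.

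The main obstacle — really the only non-bookkeeping point — is justifying that $\pi_i + \pi_j - \pi_l$ is exactly $0$ or $n$ so that dividing by $n$ keeps everything integral and the term $(\pi_i+\pi_j-\pi_l)/(m+1)$ in the statement is literally an integer; this uses $\pi_i,\pi_j,\pi_l \in \{1,\dots,n-1\}$ together with $i \le j$, which forbids $\pi_i + \pi_j - \pi_l = -n$ unless... actually one should note the possibility $\pi_i + \pi_j - \pi_l \le 2(n-1) - 1 < 2n$ and $\ge 1 + 1 - (n-1) = 3 - n > -n$, pinning it to $\{0,n\}$. Everything else is the telescoping computation in Lemma~\ref{lem: gamma guy}, which is already available, and the correspondence from Theorem~\ref{thm: ideal and vector} guaranteeing that the $v$-to-$(\pi,k)$ data is exactly the Ap\'ery data of the numerical set, so that Lemma~\ref{lem: crs and ns} applies without ambiguity.
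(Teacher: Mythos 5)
Your proposal is correct and takes essentially the same route as the paper's proof: apply Lemma~\ref{lem: crs and ns} to the Ap\'ery set, rewrite $w_i+w_j\ge w_l$ in terms of the $k_i$'s and $\pi_i$'s, and telescope $k_i$ and $k_l-k_j$ into sums of $u_x+\gamma_x$ via Lemma~\ref{lem: gamma guy}. Your added care about the integrality of $(\pi_i+\pi_j-\pi_l)/(m+1)$ and about the reversibility of each step merely makes explicit what the paper compresses into ``essentially reversing these steps.''
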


\begin{proof}
Let $(v_1,v_2,\dots,v_m)$ be the position vector of a numerical semigroup $S$ with Ap\'ery set $\Ap(S,m+1)=\{w_0,\dots,w_{m}\}$, where $w_i = (m+1)k_i+\pi_i$ for $1\le i\le m$. If $0<i\le j<l$ and $\pi_i+\pi_j \equiv \pi_l \bmod (m+1)$, then $w_i+w_j \equiv w_l \bmod (m+1)$ and by Lemma~\ref{lem: crs and ns}, we have $w_i+w_j \ge w_l$. Thus,
\begin{eqnarray*}
w_i+w_j &\ge& w_l\\
k_i  +\frac{\pi_i +\pi_j-\pi_l}{m+1} &\ge& k_l-k_j\\
\sum_{x=1}^i (k_x-k_{x-1}) + \frac{\pi_i+\pi_j-\pi_l}{m+1} &\ge& \sum_{x=j+1}^l (k_x-k_{x-1})\\
\sum_{x=1}^i (u_x+\gamma_x) + \frac{\pi_i+\pi_j-\pi_l}{m+1} &\ge& \sum_{x=j+1}^l (u_x+\gamma_x).
\end{eqnarray*}
Essentially reversing these steps provides the converse argument and finishes the proof.
\end{proof}

The next example applies Theorem~\ref{thm: pv for ns} to numerical semigroups containing 3.

\begin{ex}\label{ex: pv for e3}
Every numerical set closed under addition by 3 belongs to one of two permutation classes, namely one defined by permutation $[12]$ or the permutation $[21]$. We consider these two cases separately:
\begin{enumerate}
\item For the permutation $[12]$, we have $\pi_1+\pi_1\equiv \pi_2\bmod 3$, $\gamma_1=0$, and $\gamma_2=0$. Thus, 
\begin{eqnarray*}
u_1 + \frac{1+1-2}{3} &\ge& u_2\\
u_1 &\ge& u_2.
\end{eqnarray*}
\item For the permutation $[21]$, we have $\pi_1+\pi_1\equiv \pi_2\bmod 3$, $\gamma_1=0$, and $\gamma_2=1$. Thus, 
\begin{eqnarray*}
u_1 + \frac{2+2-1}{3} &\ge& u_2+1\\
u_1 &\ge& u_2.
\end{eqnarray*}
\end{enumerate} 
We conclude that the vector $(v_1,v_2)$ corresponds to a numerical semigroup if and only if $u_1\ge u_2$, or equivalently, $$v_1-1 \ge \left\lfloor\frac{v_2-1}{2}\right\rfloor.$$
\end{ex}

Using the method derived from Theorem~\ref{thm: pv for ns} and demonstrated in Example~\ref{ex: pv for e3}, we summarize the computational results for semigroups containing $n$ where $2\le n\le 5$. We omit the details.

\begin{thm}\label{thm: pv for e3 e4} Let $(v_1,v_2,\dots,v_{n-1})$ be an $(n-1)$-tuple of positive integers and set $$ u_i = \left\lfloor\frac{v_i-1}{i}\right\rfloor.$$ The following is a list necessary and sufficient conditions for $v$ to be the position vector of the a numerical semigroup containing $n$, for $2\le n\le 5$.
\begin{itemize}
\item n=2: $(v_1)$ with no restriction
\item n=3: $(v_1,v_2)$ such that $u_1\ge u_2$.
\item n=4: $(v_1,v_2,v_3)$ with restrictions given in Table~1
\begin{table}[htdp]\label{tab: table1}
\begin{center}
\begin{tabular}{|c|c|} \hline
$\sim$ to one of & satisfying \\ \hline
$(1,1,1)$,  $(1,2,3)$ & $u_1\ge u_2$ and $u_1\ge u_3$ \\ \hline 
$(1,1,2)$, $(1,2,2)$ &$u_1\ge u_3$ \\ \hline
$(1,2,1)$ & $u_1\ge u_2+u_3$ \\ \hline
$(1,1,3)$ & $u_1\ge u_2+u_3+1$\\\hline
\end{tabular}
\end{center}\caption{Restrictions for a 3-tuple to represent a semigroup}
\end{table}

\item n=5: $(v_1,v_2,v_3,v_4)$ with restrictions given in Table~2
\begin{table}[htdp]
\begin{center}
\begin{tabular}{|c|c|} \hline
$\sim$ to one of & satisfying \\ \hline

$(1,1,1,1)$,  $(1,1,2,2)$,  & $u_1\ge u_2$, $u_1\ge u_3$, $u_1\ge u_4$,  \\  
$(1,2,2,3)$, $(1,2,3,4)$ & and $u_1+u_2\ge u_3+u_4$ \\ \hline

$(1,2,1,2)$, $(1,2,3,1)$ & $u_1\ge u_2$ and $u_1 \ge u_3 + u_4$ \\ \hline

$(1,1,3,3)$, $(1,1,1,4)$ & $u_1\ge u_2$ and $u_1 \ge u_3 + u_4+1$ \\ \hline

$(1,1,1,2)$, $(1,2,1,4)$ & $u_1\ge u_2+u_3$, $u_1 \ge u_4$, \\
& and $u_1+u_2\ge u_3+u_4$ \\ \hline

$(1,2,3,3)$, $(1,1,3,1)$ & $u_1\ge u_2+u_3+1$, $u_1 \ge u_4$, \\
& and $u_1+u_2\ge u_3+u_4$ \\ \hline

$(1,1,1,3)$,  $(1,2,3,2)$,  & $u_1\ge u_2+u_3+u_4+1$  \\  
$(1,2,2,1)$, $(1,1,2,4)$, & \\
 $(1,1,2,3)$, $(1,2,2,2)$ &\\ \hline
 
 $(1,1,3,4)$  & $u_1\ge u_2+u_3+u_4+2$ \\ \hline
 
  $(1,2,1,1)$  & $u_1\ge u_2+u_3+u_4$ \\ \hline
  
$(1,1,2,1)$, $(1,2,1,3)$ & $u_1\ge u_2+u_3$ and $u_1 \ge u_3 + u_4$ \\ \hline

$(1,2,2,4)$, $(1,1,3,2)$ & $u_1\ge u_2+u_3+1$ and $u_1 \ge u_3 + u_4+1$ \\ \hline

\hline
\end{tabular}
\end{center}\caption{Restrictions for a 4-tuple to represent a semigroup}
\end{table}
\end{itemize}
\end{thm}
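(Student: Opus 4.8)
\textbf{Proof proposal for Theorem~\ref{thm: pv for e3 e4}.}

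The plan is to specialize Theorem~\ref{thm: pv for ns} to each value $m=n-1$ with $m\in\{1,2,3,4\}$ and carry out the (finite) bookkeeping. The first step is to enumerate the permutation classes: for each $m$ there are exactly $m!$ permutations $\pi=[\pi_1\cdots\pi_m]$ of $\{1,\dots,m\}$, hence $m!$ permutation classes partitioning $\N^m$, and Proposition~\ref{prop: perm class} tells us that the class of a vector $v$ is detected by the residues $v_i \bmod i$. For each class I would pick the canonical small representative listed in Tables~1 and~2 (the entries $v_i\in\{1,\dots,i\}$ with $v_i\equiv \pi$'s conversion data), verify it indeed lies in that class, and record the values $\gamma_i$ from Lemma~\ref{lem: gamma guy}, which depend only on the descent pattern of $\pi$ (namely $\gamma_i=1$ iff $\pi_{i-1}>\pi_i$). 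Note that $n=2$ is trivial: $m=1$, there are no triples $0<i\le j<l$, so the condition in Theorem~\ref{thm: pv for ns} is vacuous, giving ``no restriction''; and $n=3$ is exactly Example~\ref{ex: pv for e3}, which is already done.

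The second step, for $n=4$ (so $m=3$, $n=m+1=4$) and $n=5$ (so $m=4$, $n=m+1=5$), is to list, for the fixed permutation $\pi$ of the class, all triples $(i,j,l)$ with $0<i\le j<l\le m$ satisfying the congruence $\pi_i+\pi_j\equiv\pi_l \pmod{m+1}$, and for each such triple write down the corresponding inequality
$$\sum_{x=1}^i (u_x+\gamma_x) + \frac{\pi_i+\pi_j-\pi_l}{m+1} \;\ge\; \sum_{x=j+1}^l (u_x+\gamma_x).$$
Since $\pi_i,\pi_j,\pi_l\in\{1,\dots,m\}$ and $\pi_i+\pi_j\le 2m<2(m+1)$, the fraction $(\pi_i+\pi_j-\pi_l)/(m+1)$ is either $0$ (when $\pi_i+\pi_j=\pi_l$, i.e.\ no wraparound, forcing $i<j$) or $1$ (when $\pi_i+\pi_j-\pi_l=m+1$). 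Plugging in the known $\gamma_x$ for that class, each inequality collapses into a relation purely among the $u_x$'s and an integer constant; the conjunction of these over all valid triples is precisely the row of the table for that class. I would organize this as a table-by-table walk: for each of the $6$ classes when $m=3$ and each of the $24$ classes when $m=4$, compute the triple set, simplify, and check it matches the stated condition (grouping classes that yield identical conditions, as the tables do).

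The main obstacle is purely the volume and care of the $n=5$ case: $24$ permutation classes, each requiring enumeration of all admissible $(i,j,l)$ triples and a small modular computation, with the additional subtlety that the $\gamma_i$ differ across classes and that several distinct triples can contribute to one class (so one must verify that some inequalities are implied by others before declaring the ``minimal'' list in the table). There is also a consistency check worth flagging: Theorem~\ref{thm: pv for ns} is stated for a vector already known to be a position vector of \emph{some} numerical set, which it always is by Corollary~\ref{cor: ideal and vector}, so no existence issue arises; but one should make sure the canonical representatives chosen in the tables are genuinely in the claimed permutation classes (a quick conversion-vector computation via Definition~\ref{defn: conversion}). Since the paper explicitly says ``We omit the details,'' the intended proof is exactly this finite verification, and I would present it by fully working one representative class in each of $m=3$ and $m=4$ to illustrate the mechanism, then asserting the remaining rows follow by the same routine computation.
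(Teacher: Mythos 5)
Your approach is exactly the paper's: the proof of this theorem is omitted in the paper precisely because it is the finite verification you describe, namely applying Theorem~\ref{thm: pv for ns} permutation class by permutation class, as demonstrated for $n=3$ in Example~\ref{ex: pv for e3}. One slip to correct before carrying it out: your parenthetical claim that $\pi_i+\pi_j=\pi_l$ ``forces $i<j$'' is false --- the diagonal triples with $i=j$ are admissible and essential (Example~\ref{ex: pv for e3} rests entirely on the triple $i=j=1$, $l=2$, which yields $u_1\ge u_2$), and dropping them would lose conditions such as $u_1\ge u_2$ in the $(1,1,1)$ row of Table~1. Since your main enumeration correctly ranges over $0<i\le j<l$, this is a mislabeled aside rather than a structural flaw, but it must not be acted on during the bookkeeping.
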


In Theorem~\ref{thm: pv for e3 e4}, $n$ is an element of the semigroup. By adding an extra restriction, we can force $n$ to be the multiplicity.

\begin{prop}
Let $S$ be a semigroup containing $n$ with position vector $(v_1,v_2,\dots,v_{n-1})$. Then $n$ is the multiplicity of $S$ if and only if $v_1>1$.
\end{prop}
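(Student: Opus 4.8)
The plan is to characterize the multiplicity $e_0(S)$ directly in terms of the first component of the position vector. Recall that $e_0(S)$ is the smallest nonzero element of $S$, so $n$ is the multiplicity of $S$ if and only if every element of $S\setminus\{0\}$ is at least $n$; equivalently, no element of $S$ lies strictly between $0$ and $n$. First I would translate this into the enumeration $S=\{\lambda_0,\lambda_1,\lambda_2,\dots\}$: the condition ``$n$ is the multiplicity'' says precisely that $\lambda_1\ge n$, i.e.\ that $n$ is in position $1$ in the enumeration (since $n\in S$, it is the first element after $0$ exactly when nothing smaller is in $S$).

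Next I would connect this to the Ap\'ery set. Since $n\in S$, we have $n-n=0\in S$, so $n\notin \Ap(S,n)$; thus the smallest nonzero element of $\Ap(S,n)$ is $w_1=\lambda_{x_1}$, where $x_1=v_1$ by definition of $\pv_n(S)$. The key observation is that $w_1$ is the smallest element of $S$ that is $\equiv$ its residue mod $n$ and nonzero, and more usefully, that the elements $\lambda_1,\dots,\lambda_{x_1-1}$ (those strictly below $w_1$ in the enumeration) are exactly the nonzero elements of $S$ lying in the interval $(0,n)$, together possibly with $n$ itself. I would argue this as follows: any element $s\in S$ with $0<s<n$ satisfies $s-n<0\notin S$, hence $s\in\Ap(S,n)$, so $s\in\{w_1,\dots,w_{n-1}\}$ and in particular $s\ge w_1$; conversely if $v_1>1$ then there is some $\lambda_j$ with $0<j<x_1$, and I claim $\lambda_j\in\Ap(S,n)$ is impossible (it would force $\lambda_j\ge w_1=\lambda_{x_1}$, contradiction), so $\lambda_j-n\in S$, forcing $\lambda_j\ge n$; combined with $\lambda_j<w_1$ and $n\in S$ one shows $\lambda_j=n$ is the only possibility, whence $x_1=2$ and $n$ is the multiplicity. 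The cleaner phrasing: $v_1=1$ means $w_1=\lambda_1$ is the smallest nonzero element of $S$, so $e_0(S)=w_1$; and since $w_1\notin\{0\}$ but $w_1\in\Ap(S,n)$ we have $w_1\ne n$, and $w_1<n$ is impossible only if... — actually the crisp equivalence is: $v_1=1\iff\lambda_1=w_1\iff$ the smallest nonzero element of $S$ is $w_1$; since $w_1\equiv\pi_1\not\equiv 0\pmod n$ we get $w_1\ne n$, but $w_1$ could still be $<n$ or $>n$, so I need to also use that $n\in S$: if $e_0(S)=w_1$ then $w_1\le n$, and $w_1\ne n$ gives $w_1<n$, so $e_0(S)<n$, i.e.\ $n$ is not the multiplicity. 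For the converse, if $v_1>1$ then $\lambda_1<w_1$ and $\lambda_1\notin\Ap(S,n)$, so $\lambda_1-n\in S$ with $\lambda_1-n\ge 0$; minimality of $\lambda_1$ among nonzero elements forces $\lambda_1-n=0$, i.e.\ $\lambda_1=n=e_0(S)$.

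I expect the main (minor) obstacle to be bookkeeping the edge cases around $S=\N_0$ and confirming that ``$\lambda_1$ is the smallest nonzero element of $S$, which equals $n$'' genuinely captures ``$n$ is the multiplicity'' in both directions without circular reasoning; everything else is a short chain of definitional unwindings. A careful statement-by-statement verification that $v_1>1\iff\lambda_1=n\iff e_0(S)=n$ closes the argument, and no heavy machinery from the earlier sections is needed beyond the definition of $\pv_n$ and the basic property that an element below $n$ lies in $\Ap(S,n)$.
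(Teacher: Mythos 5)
Your final argument is correct and is essentially the paper's own proof: $v_1>1$ iff the first nonzero element of $S$ is not in $\Ap(S,n)$, which (using $n\in S$ and the fact that $n\notin\Ap(S,n)$) is equivalent to that element being $n$ itself rather than something smaller. One caveat: the exploratory claim in your middle paragraph that every $\lambda_j$ with $0<j<x_1$ must equal $n$, ``whence $x_1=2$,'' is false in general (e.g.\ $S=\langle 3,7,8\rangle$ has $v_1=3$), but your superseding ``cleaner phrasing'' and the final converse argument via minimality of $\lambda_1$ avoid this and are what should be kept.
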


\begin{proof}
We always have $v_1\ge 1$. If $v_1> 1$, then the first nonzero element of $S$ is not in $\Ap(S,n)$. Thus the first nonzero element of $S$ cannot be smaller than $n$, and so $n$ is the multiplicity of $S$. If $v_1 = 1$, then the first nonzero element of $S$ is in $\Ap(S,n)$. Thus the first nonzero element of $S$ is smaller than $n$, and $n$ is not the multiplicity of $S$. 
\end{proof}

\begin{rmk}\label{rmk: res. to semigroups}
It follows that in Theorem~\ref{thm: pv for e3 e4}, we can add the restriction $u_1>0$ to ensure that $n$ is the multiplicity of $S$.
\end{rmk}


\begin{thebibliography}{14}

\bibitem{BDF} Barucci, V., Dobbs, D., Fontana, M.: Maximality properties in numerical semigroups and applications to one-dimensional analytically irreducible local domains. Mem. Amer. Math. Soc. {\bf 125}, vii-77 (1997).

\bibitem{BGGR} Branco, M., Garc\'ia-Garc\'ia, J., Garc\'ia-Sanchez, P. A., Rosales, J.: Systems of inequalities and numerical semigroups. J. London Math. Soc. (2) {\bf 65}, 611-623 (2002).


\bibitem{GR} Garc\'{i}a-S\'{a}nchez, P. A., Rosales, J. C.: Numerical semigroups. Developments in Mathematics, {\bf 20}. Springer, New York (2009). ISBN 978-1-4419-0159-0.

\bibitem{K} Kaplan, N.: Counting numerical semigroups by genus and some cases of a question of Wilf. J. Pure Appl. Algebra, (5) {\bf 216}, 1016-1032 (2009).

\bibitem{MM} Marzuola, J., Miller, A.: Counting numerical sets with no small atoms. J. Combin. Theory Ser. A 117 (2010), no. 6, 650-667.

\bibitem{PT} Pellikaan, R., Torres, F.: On Weierstrass semigroups
and the redundancy of improved geometric Goppa codes. IEEE Trans. Inform. Theory 45(7) (1999), 2512--2519.

\bibitem{PS} Pemmaraju, S. V., Skiena, S.: ``Permutations and combinations" in computational discrete mathematics: combinatorics and graph theory with Mathematica. Cambridge University Press (2003). ISBN 978-0-5211-2146-0.


\bibitem{T} Thompkins, C.: Machine attacks on problems whose variable are permutations. Proc. Symposia Applied Mathematics. McGraw-Hill, New York (1956).
\end{thebibliography}
\end{document}